\newcommand{\mcal}[1]{{\mathcal {#1}}} 
\newtheorem{theorem}{Theorem}  
\newtheorem{proposition}[theorem]{Proposition}
\newtheorem{hypothesis}[theorem]{Hypothesis}
\theoremstyle{definition}
\theoremstyle{definition}
\theoremstyle{definition}
\theoremstyle{definition}
\theoremstyle{definition}
\theoremstyle{definition}
\theoremstyle{plain}\newtheorem*{theorem*}{Theorem}
\theoremstyle{plain}\newtheorem*{corollary*}{Corollary}
\theoremstyle{plain}\newtheorem*{lemma*}{Lemma}
\theoremstyle{plain}\newtheorem*{proposition*}{Proposition}
\theoremstyle{remark}\newtheorem*{remark*}{Remark}
\theoremstyle{remark}\newtheorem*{remarks*}{Remarks}
\theoremstyle{definition}\newtheorem*{conjecture*}{Conjecture}
\theoremstyle{definition}\newtheorem*{definition*}{Definition}
\theoremstyle{definition}\newtheorem*{example*}{Example}
\theoremstyle{definition}\newtheorem*{question*}{Question}
\theoremstyle{definition}\newtheorem*{questions*}{Questions}
\theoremstyle{definition}\newtheorem*{hypothesis*}{Hypothesis}
\def\qed{\ifhmode\unskip\nobreak\fi\ifmmode\ifinner\else\hskip5pt\fi\fi
 \hfill\hbox{\hskip5pt\vrule width4pt height6pt depth1.5pt\hskip1pt}}
\renewcommand{\int}{\ensuremath{\operatorname{\mathsf{int}}}}
\newcommand{\dimply}{\ensuremath{\:\Longleftrightarrow\:}}
\newcommand{\R}[1]{\ensuremath{{\mathbb R}^{#1}}} 
\renewcommand{\P}[1]{\ensuremath{{\bf P}^{#1}}} 
\newcommand{\emp}{\ensuremath{\emptyset}}
\def\d#1dt{\frac{d#1}{dt}}    
\newcommand{\lam}{\ensuremath{\lambda}}
\newcommand{\co}{\colon\thinspace} 
\newcommand{\p}{\ensuremath{\partial}}
\newcommand{\Int}[1]{\ensuremath{\operatorname{\mathsf{int\,}}(#1)}}
\def\emp{\varnothing}
\def\P{{\mcal P}}
\def\emp{\varnothing}
\begin{document}
\title{\bf Monotone Dynamical Systems with Polyhedral Order Cones and Dense Periodic Points} \author{ Morris
  W. Hirsch\\ Department of Mathematics\\ University of Wisconsin at
  Madison\\ University of California at Berkeley}
\maketitle

\begin{abstract} 

Let $X\subset 
  \R n$ be a nonempty whose interior is connected and dense in $X$.
  Assume $T\co X\approx X$ is injective, continuous, and monotone for
  the order defined by a closed convex cone $K\subset \R n$.

  \smallskip
  \noindent
{\bf Theorem.}  If $K$ is polyhedral and the set of periodic points is
dense, then $T$ 
is periodic. 

\end{abstract}

\tableofcontents

\section{Introduction}   \label{sec:intro}
 
$K\subset \R n$ (Euclidean $n$-space)  is a closed convex cone
that is {\em solid} (has
nonempty interior $\Int K$), and {\em pointed} (contains no affine
line).  Briefly: $K$ is a {\em solid order cone} in $\R n$.

$\R n$ is given the (partial) order $\succeq$ determined by $K$, referred
to as the {\em $K$-order}:
\[  y\succeq x \dimply y -x\in K, \]

$X\subset \R n$ is a  nonempty set whose interior $\Int X$ is
  connected and dense in $X$.

 $T\co X\to X$ is  an injective continuous map that is  {\em monotone} for the
  $K$-order: 
\[x \succeq y\implies Tx \succeq Ty.
\]
$x$ has {\em period $k$} if $k$ is a positive integer and
$T^kx=x$. The set of these points is denoted by $\P_k =\P_k (T)$, and
the set of periodic pointsby $\P=\P(T)=\bigcup_k\P_k$. 

$T$ is {\em periodic} if $X=\P_k$, and {\em pointwise periodic} if $X=\P$.

\smallskip
Our main concern is the following speculation:
\begin{conjecture*}  {\em If $\P$ is dense in $X$, then $T$ is periodic.}
\end{conjecture*}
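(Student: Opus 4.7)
First I would record two preliminary observations. (i) Injectivity plus monotonicity forces $T$ to preserve strict $K$-order, because $x \succ y$ with $Tx = Ty$ would contradict injectivity. (ii) Every periodic orbit is therefore a finite antichain in $(\R n, \succeq)$: a strictly comparable pair $T^i p \prec T^j p$ inside a $k$-orbit would, under repeated application of $T^{j-i}$, produce an infinite strictly monotone chain in a finite set. Also, each $\P_k = \{x : T^k x = x\}$ is closed, being the zero set of the continuous map $T^k - \operatorname{id}$.

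Next I would apply the Baire category theorem inside the open subset $\Int X \subset \R n$, which is a Baire space. Density of $\P$ in $X$ gives density of $\P \cap \Int X$ in $\Int X$, and writing this dense set as the countable union $\bigcup_k (\P_k \cap \Int X)$ of relatively closed subsets yields some $k_0$ for which $\P_{k_0}$ has nonempty interior $U$ in $\Int X$, on which $T^{k_0}$ is the identity. Put $W = \P_{k_0} \cap \Int X$; then $W$ is closed in $\Int X$ and contains the nonempty open set $U$.

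The core of the proof is to show $W$ is also relatively open in $\Int X$. Once this is done, connectedness of $\Int X$ gives $W = \Int X$, and then continuity of $T^{k_0}$ combined with $X = \overline{\Int X}$ forces $\P_{k_0} = X$, so $T$ is periodic. At a given $p \in W$, the strategy is a squeeze: for each $q$ in a neighborhood of $p$, find bracket points $q^- \preceq q \preceq q^+$ in $W$ with $q^\pm$ arbitrarily close to $q$. Monotonicity of $T^{k_0}$ (which fixes $q^\pm$) then gives $q^- \preceq T^{k_0} q \preceq q^+$, and shrinking the bracket forces $T^{k_0} q = q$. The principal obstacle, and the step where polyhedrality of $K$ must enter decisively, is producing these bracket points in $W$ itself rather than in the larger dense set $\P$, whose members may have periods not dividing $k_0$. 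The finitely many extreme rays of $K$, equivalently its finitely many defining hyperplanes, should endow small order intervals (which are polytopes because $K$ is polyhedral) with enough combinatorial, finite-dimensional rigidity to allow such a refinement, perhaps via a second Baire argument applied to the closed sets $\P_{m k_0}$ inside such order intervals, or via invariance of domain applied to the iterates $T^i|_U$, which are homeomorphisms onto their images because $T^{k_0 - 1}$ serves as a continuous local inverse. Carrying out this refinement is where I expect the bulk of the work to lie, and is the reason the hypothesis that $K$ is polyhedral, rather than merely solid, is needed.
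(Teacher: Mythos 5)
Your plan contains a genuine gap, and you have located it yourself: everything hinges on producing, arbitrarily close to an arbitrary point $q$ near the frontier of $W=\P_{k_0}\cap \Int X$, bracket points $q^-\preceq q\preceq q^+$ whose period \emph{divides $k_0$} (or at least a fixed bound), and nothing in the proposal delivers this. The density hypothesis only supplies points of $\P=\bigcup_k\P_k$ with no control on the period, and the Baire step gives a single open set $U$ on which the period is bounded but says nothing about periods near other points of $W$; a ``second Baire argument inside small order intervals'' would again only produce some \emph{other} open set where some \emph{other} period $mk_0$ has interior, not brackets around the given $q$. The aside that $T^{k_0-1}$ is a continuous local inverse of $T$ is also only valid on $\P_{k_0}$ itself, so invariance of domain does not obviously propagate the identity off $U$. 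In short, the reduction ``$W$ closed with nonempty interior, so it suffices to show $W$ is open'' is sound, but the openness claim is exactly as hard as the theorem, and the squeeze argument as described does not prove it. (Your preliminary observations are correct but are not used: that periodic orbits are antichains plays no role in the rest of the plan.)

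It is worth contrasting this with how the paper evades the period-control problem entirely. It never tries to show that one fixed $\P_{k_0}$ is open; instead, given an arbitrary $x\in\Int X$, it brackets $x$ by periodic points $a\ll x\ll b$ of a \emph{common} period $k$ (take the product of the two periods), builds a compact arc $J\subset \P_k\cap[a,b]$ totally ordered by $\ll$ (Proposition \ref{th:arcs}, via Zorn), and sets $p=\sup\{y\in J: y\preceq x\}$, $q=\inf\{y\in J: y\succeq x\}$. Either $p=q=x$, or $x$ lies on $\p[p,q]$, and polyhedrality of $K$ is used precisely here: $\p[p,q]$ is a finite union of faces whose open faces are $(n-1)$-manifolds, so an induction on dimension (together with Montgomery's theorem and a Lefschetz-duality argument showing $\P$ is dense in such hypersurfaces) shows $T|\p[p,q]$ is periodic. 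This gives pointwise periodicity of $T$ on $\Int X$, and Montgomery's theorem, not a connectedness/openness argument, is what converts pointwise periodicity into a single global period. If you want to salvage your route, you would need either a substitute for Montgomery's theorem or a genuine mechanism for bounding periods locally; as written, the central step is missing.
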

\noindent We prove this for $K$ a {\em polyhedron}: the
intersection of finitely many closed affine halfspaces of $\R n$.

\begin{theorem} [\sc Main]         \label{th:main}
If $K$ is
a polyhedron and  $\P$ is dense in $X$, then  $T$ is periodic.
\end{theorem}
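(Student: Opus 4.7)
I would structure the proof in three steps, the second of which is the decisive one.

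\medskip

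\emph{Step 1: every periodic orbit of period $>1$ is an antichain for $\preceq$.} If $p$ has period $k>1$ and $p_i = T^i p$, I claim that $p_i$ and $p_j$ are incomparable whenever $i \ne j$. Otherwise, suppose $p_i \preceq p_j$; applying $T^{j-i}$ repeatedly and using monotonicity produces a chain
\[
 p_i \preceq p_j \preceq p_{2j-i} \preceq p_{3j-2i} \preceq \cdots
\]
that returns to $p_i$ after $k/\gcd(j-i,k)$ iterations. All the intermediate orbit points must then equal $p_i$, contradicting the period being exactly $k$.

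\medskip

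\emph{Step 2: periods are uniformly bounded by a constant $M=M(K)$.} This is where polyhedrality is essential. The plan is to extract from $K$ a finite combinatorial invariant --- the face lattice, or equivalently the arrangement on $S^{n-1}$ cut out by the supporting hyperplanes of $K$ --- and to show that the cyclic shift action of $T$ on any periodic orbit factors through the automorphism group of this finite invariant. Concretely, I would study the differences $\{p_j - p_i : i \ne j\}$ of an orbit; by Step~1 they lie outside $K \cup (-K)$, and polyhedrality decomposes this incomparability region into finitely many open chambers. Monotonicity preserves the properties ``lies in $K$'' and ``lies in $-K$,'' and the shift action of $T$ must therefore respect a finite combinatorial refinement of this decomposition. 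The upshot would be an embedding $\ZZ/k \hookrightarrow G(K)$ into a finite group depending only on $K$, yielding the bound $k \le |G(K)|$.

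\medskip

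\emph{Step 3: conclusion.} Set $N := \mathrm{lcm}\{1, 2, \ldots, M(K)\}$. Every periodic point satisfies $T^N p = p$, so $\P \subseteq \Fix{T^N}$. Since $T^N$ is continuous, $\Fix{T^N}$ is closed in $X$; since it contains the dense set $\P$, it equals $X$. Thus $T^N$ is the identity on $X$ and $T$ is periodic.

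\medskip

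\emph{Main obstacle.} Step~2 is the heart of the matter. The antichain conclusion of Step~1 is purely order-theoretic and does \emph{not} bound orbit size --- antichains in polyhedrally ordered $\RR^n$ can already be infinite in dimension $\ge 2$. Polyhedrality must therefore be exploited through the interaction of (a) the cyclic shift action of $T$ on a single orbit, (b) the monotonicity constraints propagating the order between successive orbit points, and (c) the finiteness of the face lattice of $K$. Making the embedding $\ZZ/k \hookrightarrow G(K)$ precise, and identifying the correct finite group $G(K)$, is the delicate point. An alternative route is a Baire-category argument applied to $\P = \bigcup_k \Fix{T^k}$ --- a countable union of closed subsets of the Baire space $X$, one of which must have nonempty interior --- followed by a propagation argument extending ``$T^{k_0} = \mathrm{id}$'' from an open set to all of $X$; but this propagation again relies on the same finite combinatorial content of polyhedrality.
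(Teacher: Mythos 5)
There is a genuine gap, and it sits exactly where you yourself locate it: Step~2. The claim that the period of every periodic point is bounded by a constant $M(K)$ depending only on the cone is nowhere proved, and the mechanism you propose does not work. Monotonicity controls only the two properties ``$p_j-p_i\in K$'' and ``$p_j-p_i\in -K$''; it says nothing about which of the finitely many chambers of the complement of $K\cup(-K)$ the difference $Tp_j-Tp_i$ lands in, because $T$ is merely continuous and monotone, not linear or affine. By your Step~1 all differences along a periodic orbit lie in that complement, so monotonicity imposes \emph{no} constraint on how the cyclic shift permutes chamber data, and no embedding $\ZZ/k\hookrightarrow G(K)$ is forthcoming. (Steps~1 and~3 are correct as far as they go, but note that Step~3 would already be content with a bound on periods depending on $T$, not only on $K$; even that weaker statement is what is missing.) The Baire-category alternative you mention at the end has the same hole: some $\Fix{T^{k_0}}$ has nonempty interior, but you give no argument for propagating $T^{k_0}=\mathrm{id}$ from that open set to the rest of $X$.

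The paper takes a completely different route, and it never establishes (or needs) a uniform bound on periods in terms of $K$. It inducts on the dimension $n$, and polyhedrality enters only to guarantee that for periodic $p\ll q$ the boundary $\p[p,q]$ is a finite union of $(n-1)$-cells. The steps are: (a) order intervals between periodic points of the same period are periodically invariant; (b) any two periodic points $u\ll v$ with $[u,v]\subset X$ are joined by a compact totally ordered arc of periodic points, which forces $\P$ to be dense on every codimension-one manifold closed in $X$, since such an arc must cross it; (c) combining the inductive hypothesis on the open faces with Montgomery's theorem on pointwise periodic homeomorphisms of connected manifolds, $T$ restricted to $\p[p,q]$ is periodic; (d) every $x\in X$ either is periodic or lies on $\p[p,q]$ for suitable periodic $p\ll q$ extracted from such an arc, so $X=\P$, and Montgomery's theorem applied to $\Int X$ finishes the proof. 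If you want to salvage your outline, the missing input is precisely Montgomery's theorem together with some device (here, the ordered arcs of periodic points and the finite face structure of $[p,q]$) for passing from density of $\P$ to $X=\P$; a combinatorial bound on periods in terms of $K$ alone is a far stronger assertion than the theorem requires, and I see no evidence in your sketch that it is attainable.
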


\medskip
The following result of {\sc D. Montgomery}
\cite{Monty}\footnote{See also {\sc S. Kaul}
  \cite {Kaul71}} is crucial to the proof:

\begin{theorem} [{\sc Montgomery}]  \label{th:monty}
  Every pointwise periodic homeomorphism of a connected manifold  is
  periodic.
\end{theorem}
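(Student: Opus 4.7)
The plan is to establish pointwise periodicity ($\P = X$) and then invoke Montgomery's Theorem (Theorem~\ref{th:monty}) applied to the connected manifold $\Int X$. Pointwise periodicity forces $T$ to be a bijection of $X$, so $T|_{\Int X}$ is a homeomorphism of the connected manifold $\Int X$; Montgomery then supplies an integer $N$ with $T^N = \mathrm{id}$ on $\Int X$, and continuity of $T^N$ together with the density of $\Int X$ in $X$ extends this equality to all of $X$, proving that $T$ is periodic.

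I begin with a Baire-category preparation. Each $\P_k = \{x : T^k x = x\}$ is closed in $X$, and $\Int X$ is open in $\R n$ (hence a Baire space), so the density of $\P \cap \Int X = \bigcup_k (\P_k \cap \Int X)$ implies that the set $V := \bigcup_k \Int \P_k$ is open and dense in $\Int X$. In particular, each $q \in \Int X$ is a limit of points admitting an open neighborhood of periodic points of common period.

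The decisive step is the upgrade from dense to \emph{pointwise} periodicity, and it is here that polyhedrality of $K$ is essential. By continuity of $T^k$ and density of $\Int X$ in $X$, it suffices to show every $q \in \Int X$ lies in $\P$. Solidity of $K$ makes $(q+\Int K)\cap X$ and $(q-\Int K)\cap X$ non-empty open subsets of $X$; density of $\P$ furnishes periodic points $u \prec q \prec v$ of periods $k_1, k_2$, arbitrarily close to $q$. With $S := T^{k_1 k_2}$, the identities $Su=u$, $Sv=v$ combined with monotonicity give $S([u,v]) \subseteq [u,v]$. Because $K$ is polyhedral and pointed, the order interval $[u,v] = (u+K)\cap(v-K)$ is a compact convex polytope with finitely many faces. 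I would then argue, by induction on dimension and exploiting that the restriction of $S$ to a face of $[u,v]$ is monotone for the induced (again polyhedral) order on the ambient affine span, that the finite face combinatorics of $[u,v]$ together with the density of $\P$ inside $[u,v]$ compel the $S$-orbit of $q$ to be finite; thus $q \in \P$.

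The principal obstacle is this final combinatorial-dynamical step. A general monotone continuous injection of an order interval can admit non-periodic orbits, so the argument must genuinely combine the polyhedrality of $K$ (controlling how $S$-orbits traverse the faces of $[u,v]$) with the density of $\P$ in $[u,v]$ (ruling out drift to a non-periodic accumulation point). The finiteness of the face lattice is what makes an inductive descent plausible; without polyhedrality, this descent fails and one cannot in general extract periodicity of $q$ from periodicity of neighbors.
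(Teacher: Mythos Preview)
Your proposal is not a proof of the stated theorem. Montgomery's Theorem is a classical 1937 result that the paper merely \emph{cites} (reference \cite{Monty}) and uses as a black box; the paper offers no proof of it, and neither do you. What you have actually sketched is an argument for the \emph{Main Theorem} (Theorem~\ref{th:main}), with Montgomery's Theorem invoked as an ingredient in the final step.

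Even read as an attempt at the Main Theorem, the proposal has a genuine gap exactly where you yourself flag it. You correctly trap an arbitrary $q\in\Int X$ inside an $S$-invariant order interval $[u,v]$, but the sentence ``I would then argue, by induction on dimension \ldots\ that the finite face combinatorics of $[u,v]$ \ldots\ compel the $S$-orbit of $q$ to be finite'' is not an argument: nothing you wrote forces $q$ onto a proper face of $[u,v]$, and for a point in the interior of $[u,v]$ no $(n-1)$-dimensional induction hypothesis is available. The paper supplies precisely the missing device. It constructs (Proposition~\ref{th:arcs}) a compact arc $J\subset \P_k\cap[u,v]$ joining $u$ to $v$ that is \emph{totally ordered} by $\ll$; setting $p=\sup\{y\in J: y\preceq q\}$ and $r=\inf\{y\in J: y\succeq q\}$, one gets either $p=r=q\in\P_k$, or else $p\ll r$ with $q\in\partial[p,r]$. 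In the latter case the induction hypothesis is applied, via Propositions~\ref{th:base} and~\ref{th:propF}, to the finitely many $(n-1)$-dimensional open faces of $\partial[p,r]$---this is where polyhedrality actually enters. Your Baire-category preamble is never used, and without the ordered-arc construction (or some substitute that genuinely pins $q$ to a lower-dimensional face) your proposed descent has nothing to descend along.
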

\noindent This implies a sharper version of
Theorem \ref{th:main} for analytic maps:
\begin{theorem}               \label{th:cor1}
If $K$ is a polyhedron and $T$ is analytic but not periodic, $\P$ is
nowhere dense.
\end{theorem}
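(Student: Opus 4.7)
The plan is to prove the contrapositive: assuming $\P$ is \emph{not} nowhere dense in $X$, I will show that $T$ is periodic.

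First I would localize. ``Not nowhere dense'' means $\overline{\P}$ has nonempty interior in $X$, so $\P$ is dense in some nonempty relatively open set $V\subset X$. Because $\Int X$ is dense in $X$, the set $V':=V\cap\Int X$ is a nonempty open subset of $\R n$, and $\P\cap V'$ is still dense in $V'$. In particular $V'$ is a Baire space.

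Next, a Baire-category step. Each $\P_k$ is closed in $X$ as the equalizer of the continuous maps $T^k$ and $\mathsf{id}$, so $\P_k\cap V'$ is relatively closed in $V'$. Since
\[
\P\cap V' \;=\;\bigcup_{k\in\NN}\bigl(\P_k\cap V'\bigr)
\]
is dense in the Baire space $V'$, some $\P_{k_0}\cap V'$ must have nonempty interior in $V'$. Thus there is a nonempty open set $W\subset\Int X$ on which $T^{k_0}=\mathsf{id}$.

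Finally, analyticity promotes this local agreement to a global one. Since $T$ is real analytic on (a neighborhood of) $\Int X$, so is $T^{k_0}$, and the real analytic map $T^{k_0}-\mathsf{id}$ vanishes on the nonempty open set $W$. Connectedness of $\Int X$ together with the identity principle for real analytic functions then forces $T^{k_0}=\mathsf{id}$ on all of $\Int X$; continuity of $T^{k_0}$ and density of $\Int X$ in $X$ extend this to $X=\P_{k_0}$, so $T$ is periodic.

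The steps are largely routine, and I do not expect a serious obstacle; the main point of care is the precise interpretation of ``$T$ is analytic,'' i.e.\ ensuring that $T^{k_0}-\mathsf{id}$ really is a real analytic map on the connected open set $\Int X$ so that the identity theorem applies. It is worth noting that neither the polyhedral hypothesis on $K$ nor the monotonicity of $T$ actually enters this argument: they are inherited from the hypotheses of Theorem \ref{th:main}, which supplies the non-analytic density-implies-periodicity content, while the analytic ``sharpening'' from \emph{dense} to \emph{nowhere dense} is a purely local-to-global consequence of analyticity, routed through Baire category.
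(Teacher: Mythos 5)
Your overall skeleton (contrapositive, Baire category to produce a $k_0$ with $T^{k_0}=\mathsf{id}$ on a nonempty open set, then the identity principle for real-analytic maps on the connected open set $\Int X$) is reasonable, and the final analytic-continuation step is fine. But the Baire-category step as written is a genuine gap: from the fact that $\P\cap V'=\bigcup_k(\P_k\cap V')$ is merely \emph{dense} in the Baire space $V'$ you cannot conclude that some $\P_{k_0}\cap V'$ has nonempty interior. The Baire category theorem gives that conclusion only when the countable union of closed sets actually \emph{covers} a nonempty open set; a countable union of closed nowhere dense sets can perfectly well be dense, as $\QQ\cap(0,1)=\bigcup_q\{q\}$ shows. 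At this point of your argument you only know $\P$ is dense in $V'$, not that it exhausts $V'$, so the step fails.

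This is exactly where the hypotheses you declare unused --- polyhedrality of $K$ and monotonicity of $T$ --- must enter. The paper's proof of Theorem \ref{th:main} is local in nature: given that $\P$ is dense in a neighborhood of a point $x\in\Int X$, it produces $a,b\in\P_k$ with $a\ll x\ll b$ and $[a,b]$ inside that neighborhood, and Propositions \ref{th:propA}--\ref{th:propF} then force $x\in\P$. Running that argument inside $V'$ upgrades ``$\P$ dense in $V'$'' to ``$V'\subset\P$''. Only then is $V'=\bigcup_k(\P_k\cap V')$ an honest covering of a Baire space by relatively closed sets, so some $\P_{k_0}$ has nonempty interior, and your analyticity argument (identity theorem on the connected open set $\Int X$, then continuity and density of $\Int X$ in $X$) finishes the proof. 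The paper leaves the deduction of Theorem \ref{th:cor1} implicit, but this is the route it sets up; your version omits the monotone-dynamics input entirely, and that omission is precisely where it breaks.
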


Two observations simplify Theorem \ref{th:main}:

\begin{itemize}
\item 
 The hypothesis holds if $ \P$ is dense in $\Int X$, because $\Int X$ is
dense in $X$. 

\item  The conclusion holds if $\Int X \subset \P$. 

For then Montgomery's Theorem implies $\Int X\subset \P_k$, hence
$X\subset \P_k$ because $\Int X$ is dense in $X$ and $\P_k$ is closed
in $X$,
\end{itemize}

\subsection{Notation}
 $i,j,k,l$ always denote  positive integers, and
$a, b,  p, q, u, v,  x, y, z$ points of $\R n$.

 $x\preceq y$ is a synonym for $y \succeq x$.  If $x\preceq y$ and $x\ne
y$ we write $x\prec$ or $y\succ x$. 

The relations $x\ll y$ and $y\gg x$ mean $ y-x \in \Int K$.

A set $S$ is {\em totally ordered} if  $x, y\in S \implies x \preceq y$ or
$x\succeq y$.

If $x\preceq y$, the {\em order interval} 
$[x, y]$ is $\{z\co x\preceq z \preceq y\} = K_x \cap{-K_y}$.

The translation of $K$ by $x\in \R n$ is $K_x:=\{w+x, w\in K.\}$

The image of a set or point $\xi$ under a map $H$ is denoted by $H\xi$
or  $H(\xi)$. A set $S$ is {\em positively invariant} under $H$ if $HS
\subset S$, {\em invariant} if $H\xi=\xi$, and {\em periodically
  invariant} if $H^k\xi=\xi$.

\section{Proof of the Main Theorem }   \label{sec:proofmain}
We derive three topological consequences, valid
even if $K$ is not polyhedral, from the standing
assumptions that $T$ is monotone and $\P$ is dense:  

\begin{proposition}             \label{th:propA}
Assune $p,q \in \P_k$ are such that
\[  p\ll q, \qquad p, q\in \P_k. \qquad [p,q]\subset X.
\]
Then $T^k([p,q]=[p,q]$.
\end{proposition}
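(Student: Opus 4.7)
The plan is to prove the two set inclusions separately. The inclusion $T^k([p,q]) \subseteq [p,q]$ is immediate: monotonicity of $T^k$ together with $T^k p = p$ and $T^k q = q$ gives, for any $z$ with $p \preceq z \preceq q$, that $p = T^k p \preceq T^k z \preceq T^k q = q$, so $T^k z \in [p,q]$.

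For the reverse inclusion $[p,q] \subseteq T^k([p,q])$, I would combine compactness of $[p,q]$ with the density of $\P$. The interval $[p,q]$ is closed as the intersection of $p+K$ and $q-K$, and bounded because $K$ is pointed (otherwise one extracts a unit vector in $K \cap (-K)$). Hence $T^k([p,q])$ is compact, so it suffices to exhibit a dense subset of $[p,q]$ contained in $T^k([p,q])$. The natural candidate is $\P \cap [p,q]$. That it is dense in $[p,q]$ follows from two observations: first, the set $U := \{z : p \ll z \ll q\}$ equals $(p + \Int K) \cap (q - \Int K)$, hence is open in $\R n$ and contained in $[p,q] \subseteq X$, so density of $\P$ in $X$ makes $\P \cap U$ dense in $U$; second, $U$ itself is dense in $[p,q]$, since for any $z \in [p,q]$ the segment $z_t := (1-t)z + t(p+q)/2$ lies in $U$ for every $t \in (0,1]$, using $p \ll q$ together with the convex-cone fact that $\Int K + K \subseteq \Int K$. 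Finally, each periodic point $x \in [p,q]$ lies in $T^k([p,q])$ by a cheap trick: if $T^m x = x$, then $x = T^{km} x = T^k\bigl(T^{k(m-1)} x\bigr)$, and by the easy inclusion $T^{k(m-1)} x \in [p,q]$.

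No serious obstacle is anticipated. The only delicate points are density of $U$ in $[p,q]$, which is precisely where the strict inequality $p \ll q$ is needed, and compactness of $[p,q]$, which needs pointedness of $K$. Note that the polyhedral hypothesis on $K$ plays no role here, consistent with the remark preceding the proposition that this is a topological consequence valid for general solid pointed cones.
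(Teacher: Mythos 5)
Your proof is correct and follows essentially the same route as the paper's (much terser) argument: the forward inclusion by monotonicity, and the reverse inclusion by observing that every periodic point of $[p,q]$ lies in $T^k([p,q])$ (your ``cheap trick'' is the paper's ``$T\P=\P$'') and that these points are dense in the compact set $[p,q]$. The only cosmetic difference is that the paper first reduces to $k=1$, while you work with $T^k$ directly and spell out the density of $\{z\co p\ll z\ll q\}$ in $[p,q]$, which the paper leaves implicit.
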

\begin{proof} 
It suffices to take $k=1$. Evidently $T\P=\P$, and
$T[p,q]\subset[p,q]$ because $T$ is monotone, whence $\Int{ [p,
    q]}\cap \P$ is positively invariant under $T$.  The conclusion
follows because $ \int{[p, q]}\cap \P$ is dense in $[p,q]$ and $T$ is
continuous.
\end{proof}

\begin{proposition}             \label{th:arcs}
Assume $u, v\in \P_k, u\ll v$, and $ [u,v]\subset X$.  There is a compact arc
$J\subset \P_k\cap [u,v]$ joining $u$ to $v$ that is totally ordered by
$\ll$.\footnote{This result is adapted from {\sc Hirsch \& Smith}
  \cite{Hirsch-Smith05}, Theorems 5,11 \& 5,15.}
\end{proposition}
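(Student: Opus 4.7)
The plan is to construct $J$ as a maximal totally-$\ll$-ordered chain in $\P_k \cap [u,v]$ containing $\{u,v\}$, and then identify such a maximal chain with a compact arc via its order structure. First, by Proposition \ref{th:propA} applied to $(u,v)$ with period $k$, the order interval $[u,v]$ is $T^k$-invariant, so $F := T^k$ restricts to a continuous monotone bijection of $[u,v]$ with $u$ and $v$ among its fixed points; hence $\P_k \cap [u,v] = \Fix{F}$ is a closed subset of $[u,v]$.

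Let $\mathcal{F}$ denote the family (partially ordered by inclusion) of subsets $C \subseteq \P_k \cap [u,v]$ with $\{u,v\} \subset C$ such that any two distinct elements of $C$ are $\ll$-comparable. The union of an inclusion-chain in $\mathcal{F}$ is again in $\mathcal{F}$, so Zorn's lemma produces a maximal element $J$. I would then verify: (i) $J$ is closed in $[u,v]$, via continuity of $F$ (forcing limit points into $\P_k$), closedness of $\preceq$, and a maximality argument that upgrades $\preceq$-comparability of the limit point to $\ll$-comparability using $u \ll v$; (ii) $J$ is order-dense, i.e.\ for any $a, b \in J$ with $a \ll b$ there exists $c \in J$ with $a \ll c \ll b$; (iii) a compact, order-dense, totally-$\ll$-ordered subset of $\R n$ with designated extremes $u, v$ is order-isomorphic and homeomorphic to $[0,1]$, yielding the desired arc.

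The main obstacle is step (ii). Suppose no intermediate $c$ existed; then $\Fix{F} \cap \Int{[a,b]} = \emptyset$. By Proposition \ref{th:propA} applied to $(a,b)$, $[a,b]$ is an $F$-invariant interval on which $F$ fixes only $a$ and $b$. Density of $\P$ supplies periodic $T$-orbits densely in $\Int{[a,b]}$, and all such points must have $F$-period at least $2$. The key structural observation is that any $F$-periodic orbit of period $\geq 2$ is a finite antichain for $\preceq$: if two distinct orbit points were $\preceq$-comparable, iterating $F$ would produce a strictly $\preceq$-increasing sequence within a finite orbit, an impossibility. Since $K$ is solid, antichains have empty interior in $\R n$. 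I would then analyze the closed sets $A^+ := \{w \in [a,b] : Fw \succeq w\}$ and $A^- := \{w \in [a,b] : Fw \preceq w\}$: for $w \in A^+$, the $F$-orbit is $\preceq$-monotone increasing, bounded, and hence convergent to a fixed point of $F$ in $[a,b]$. Combining this convergence with the antichain obstruction on non-fixed periodic orbits and the density of $\P$, one derives the desired contradiction with $\Fix{F} \cap \Int{[a,b]} = \emptyset$. This fixed-point-production argument, adapted from Hirsch--Smith, is the delicate heart of the proof; the remaining steps are topological bookkeeping.
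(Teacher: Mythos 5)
Your overall route is the paper's: Zorn's Lemma produces a maximal $\ll$-chain in $\P_k\cap[u,v]$ containing $u$ and $v$, which is then identified with an arc via its order structure (the paper cites Wilder's characterization of arcs where you use Cantor's characterization of the linear continuum; either works). The paper compresses everything after Zorn into the single assertion that ``maximality implies $J$ is compact and connected,'' deferring the substance to Hirsch--Smith, so the value of your write-up rests entirely on whether your steps (i) and (ii) actually supply that substance.

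As written they do not. In step (ii), your own key observation --- a non-fixed $F$-periodic orbit is an antichain --- forces every non-fixed periodic point out of $A^+\cup A^-$: if $w$ is periodic and $Fw\succ w$ or $Fw\prec w$, then $w$ is comparable with another point of its finite orbit, which the antichain property forbids. Since under the gap hypothesis $\Fix{F}\cap\Int{[a,b]}=\emp$, the dense set $\P\cap\Int{[a,b]}$ lies entirely outside $A^+\cup A^-$, so the monotone-convergence mechanism you propose is never fed by periodic points; you have not said where a point of $(A^+\cup A^-)\setminus\Fix{F}$ in $\Int{[a,b]}$ comes from. Even where a fixed point can be produced (e.g.\ Brouwer applied to the compact convex $F$-invariant set $\bigcap_j[F^jw,b]$ gives $e\in\Fix{F}$ with $e\succ w\gg a$), nothing forces $a\ll e\ll b$: the fixed point may be $b$ itself, or a fixed point of $\p[a,b]$ not $\ll$-comparable with $b$, and such a point cannot be adjoined to the chain. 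A parallel difficulty sits in step (i): the closure of a $\ll$-chain need not be a $\ll$-chain (in $\R 2$ with the quadrant order, $\{(t,t)\co 0<t\le 1\}\cup\{(-1,0)\}$ is totally ordered by $\ll$ but its closure is not), so ``upgrading $\preceq$ to $\ll$ by maximality'' is precisely the claim that needs proof, not a remark. In short, you have correctly located the delicate heart of the argument, but the contradiction in (ii) and the closedness in (i) are still asserted rather than derived; the actual fixed-point-production arguments of Hirsch--Smith (Theorems 5.11 and 5.15), which yield a fixed point \emph{strongly} between $a$ and $b$, are what must be imported or reproved here.
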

\begin{proof}
An application of  Zorn's Lemma yields  a maximal  set 
$J\subset [u, v]\cap P$ 
 such that:  $J$ is totally ordered by $\ll, \ u=\max J, \ v=\min J$.
Maximality implies $J$ is compact and connected and $u,v\in J$, so $J$
is an arc ({\sc Wilder} \cite{Wilder}, Theorem I.11.23).
\end{proof}


\begin{proposition}           \label{th:base}
Let  $M\subset X$ be a topological manifold of dimension $n-1$ without
boundary,  closed in $X$.  

\begin{description}

\item[(i)] $\P$ is dense in $M$.

\item[(ii)] If $M$ is  periodically invariant, it has a neighborhood
  base $\mcal W$ of periodically invariant open subsets. 

\item[(iii)] $\P$ is dense in each $W\in\mcal W$.

\end{description}
\end{proposition}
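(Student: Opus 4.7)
The plan is to establish (i), (ii), (iii) in that order, combining density of $\P$ in $X$ with Propositions~\ref{th:propA} and~\ref{th:arcs} together with the codimension-one topology of $M$.

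For (i), I fix $x\in M$ and an open neighborhood $V$ of $x$ in $X$. Since $\Int K\neq\emp$ and $\P$ is dense in $X$, I choose periodic points $a,b\in V$ with $a\ll x\ll b$ and pass to the lcm $k$ of their periods so that $a,b\in\P_k$. Proposition~\ref{th:arcs} then supplies a totally ordered compact arc $J\subset\P_k\cap[a,b]$ joining $a$ to $b$. The crux is to force $J\cap M\neq\emp$. Because $M$ is a closed topological $(n{-}1)$-manifold without boundary, near $x$ it locally separates a small ball $B\ni x$ into two open components $B^{\pm}$ (invariance of domain). The central geometric step is to arrange $a$ and $b$ in opposite components: the nonempty open convex cones $\pm\Int K$ at $x$ cannot both lie on the same local side of $M$ without forcing the opposite side $B^{\mp}$ into a ``slab'' region incompatible with its being a connected half-neighborhood (a connectedness argument using that $B^+$ and $B^-$ are each connected and that $M$ has empty interior in $\R n$), so by density of $\P$ one may select $a\in B^-\cap(x-\Int K)$ and $b\in B^+\cap(x+\Int K)$, after possibly relabeling the sides. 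Connectedness of $J\subset[a,b]\subset B$ then forces $J$ to cross $M$, producing the desired periodic point of $M$ inside $V$.

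For (ii), given an open $V\supset M$ and using $T^k M=M$, I construct an open periodically invariant neighborhood $W\subset V$ of $M$. By (i), $\P\cap M$ is dense in $M$; for each such $x\in\P\cap M$ I choose $p_x,q_x\in\P$ with $p_x\ll x\ll q_x$ and $[p_x,q_x]\subset V$, and set $l_x$ to be the lcm of $k$, the period of $x$, and the periods of $p_x$ and $q_x$. Proposition~\ref{th:propA} then renders $[p_x,q_x]$ invariant under $T^{l_x}$, and openness of $\ll$ ensures that the interiors $\int{[p_x,q_x]}$ cover an open neighborhood of $M$ inside $V$. The principal obstacle, and the most delicate step in the proposition, is to amalgamate these individually-invariant intervals (with varying periods $l_x$) into a single open neighborhood $W\subset V$ of $M$ invariant under one iterate $T^l$. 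I plan to achieve this through a $\sigma$-compact exhaustion $M=\bigcup_n C_n$, extracting a finite subcover on each compact piece $C_n$ and taking the lcm of the finitely many $l_x$'s involved there, then gluing across pieces using continuity of $T$ together with the setwise equality $T^k M=M$ to keep the period under control.

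For (iii), the claim is immediate: each $W\in\mcal W$ is open in $X$, and since $\P$ is dense in $X$, the intersection $\P\cap W$ is dense in $W$. The main difficulties lie in (i) (the cone-versus-manifold transversality step) and in (ii) (the amalgamation step), with (ii) being the hardest single obstacle in the proposition.
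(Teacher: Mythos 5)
Your strategy for (i) --- produce comparable periodic points on opposite local sides of $M$ and let the totally ordered arc of Proposition~\ref{th:arcs} cross $M$ --- is the paper's strategy, but the step you yourself call the crux rests on a false claim. It is not true that the cones $x+\Int K$ and $x-\Int K$ must meet opposite components of $B\setminus M$. Take $n=2$, $K$ the closed first quadrant, and $M$ the bent line $\{(t,0)\co t<0\}\cup\{0\}\cup\{(0,t)\co t>0\}$ inside a small disk $B$ about $x=0$: this is a topological $1$-manifold without boundary, closed in $B$; one component of $B\setminus M$ is the open second quadrant and the other is everything else, and \emph{both} $x+\Int K$ (the open first quadrant) and $x-\Int K$ (the open third quadrant) lie entirely in the latter component. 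So no choice of $a\in x-\Int K$, $b\in x+\Int K$ straddles $M$, and the ``slab'' connectedness argument cannot be repaired as stated (also, the local separation itself needs a duality theorem, as the paper notes, not invariance of domain). The paper's route avoids this: having separated $W\setminus M=U\cup V$, it only needs \emph{some} pair $u\ll v$ with $u,v$ in different components and $[u,v]\subset W$. That exists because if $a\ll x\ll b$ with $a,b$ both in $U$, then $\Int{[a,b]}$ is a full neighborhood of $x\in M$, hence meets $V$ (every point of $M$ lies in the frontier of both sides); taking $v'\in V\cap\Int{[a,b]}$ gives the comparable pair $(a,v')$ in opposite components, which one then perturbs to periodic points. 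In the example above this produces, e.g., $u=(-2\eps,-\eps/2)$ and $v=(-\eps,\eps)$.

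For (ii) and (iii) you have misread the statement: $\mcal W$ is a base for the topology of $M$ consisting of periodically invariant sets that are \emph{relatively open in $M$}, not a system of open neighborhoods of $M$ in $X$. This is exactly how Proposition~\ref{th:propF} uses it ($F^\circ$ is a union of periodically invariant open subsets $W_\lam$, each an $(n-1)$-manifold), and it is why (iii) is a genuine assertion requiring (i): a set open in $M$ has empty interior in $X$, so density of $\P$ in $X$ says nothing about it, and one must instead quote density of $\P$ in $M$. Under the correct reading, (ii) is one line: for periodic $u\ll x\ll v$ with $[u,v]\subset W$, Proposition~\ref{th:propA} makes $[u,v]$, hence $\Int{[u,v]}$, invariant under some $T^l$, so $M\cap\Int{[u,v]}$ is a periodically invariant, relatively open neighborhood of $x$ in $M$ contained in $W$. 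Your amalgamation of infinitely many invariant order intervals into a single invariant open neighborhood of all of $M$ is therefore unnecessary, and it is also unresolved as written: the least common multiple of the periods over an infinite cover is unbounded, and a union of sets invariant under different iterates of $T$ need not be invariant under any single iterate, so the ``gluing'' step does not go through.
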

\begin{proof} Lefschetz duality ({\sc Spanier} \cite{Spanier66}) shows
  that every point of $M$ has arbitrarily small open ball
  neighborhoods $W\subset X$ separated by $M$ into two disjoint open
  sets $U$ and $V$:
\begin{equation}                \label{eq:wmuv}
W\,\verb=\=\,M = U\cup V, \quad U\cap V=\emp.
\end{equation}
Since  $\P$ is dense in $W$ there exist  $u, v\in \P\cap U, v\in
\P\cap V$ such that
\[u\ll v, \qquad [u,v]\subset W.
\]
There is an arc $J\subset \P\cap [u, v]$ joining $u$ and $v$
(Proposition \ref{th:arcs}).  Because $J$ is separated by $M$, there
exists $p\in J\cap M$, implying $p\in W\cap \P$.  This proves (i), 
assertion (ii) follows because $M\cap \Int{[u,v]}$ is periodically
invariant, and (iii) holds because $\P$ is dense $M$ and $W$ is open
in $M$.
\end{proof} 

Let $\mcal T (m)$ stand for the statement of Theorem \ref{th:main}
for the case $n=m$.  Then $\mcal T (0)$ is trivial, and we use the
following inductive hypothesis:
 
\begin{hypothesis} [{\sc Induction}]         \label{th:hypind}
$n \ge 1$ and $\mcal T (n-1)$ holds.
\end{hypothesis}
Let $Q\subset \R n$ be a compact $n$-dimensional polyhedral cell, such
as $[p,q]$ with $p\ll q$.  Its
boundary $\p Q$ is the union of finitely many convex compact $(n-1)$-cells,  the
{\em faces} of $Q$.  Each face $F$ is  the intersection of $\p[p,q]$
with a unique affine 
hyperplane $E^{n-1}$.  The corresponding {\em open face} is
$F^\circ:=F\,\verb=\=\,\p F$, an open $(n-1)$-cell in $E^{n-1}$.
Distinct
open faces are disjoint, and their union is dense and open in $\p Q$.
\begin{proposition}             \label{th:propF}      
Assume $p, q\in \P_k, \ p\ll q, \ [p, q]\subset X$.  Then $T|\p[p,q]$ is
periodic.
\end{proposition}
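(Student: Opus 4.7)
By Proposition~\ref{th:propA}, $T^k([p,q])=[p,q]$, so $T^k$ restricts to a continuous monotone self-bijection---hence a self-homeomorphism---of the compact $n$-cell $[p,q]$, fixing $p$ and $q$, and thus also self-homeomorphs the topological $(n-1)$-sphere $\p[p,q]$. The strategy is to show that some iterate $T^{km_0}$ fixes every facet of $[p,q]$ setwise, and then to apply $\mcal T(n-1)$ on each facet.

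The central and hardest step is to show that $T^k$ permutes the finitely many $(n-1)$-facets of $[p,q]$. For $y\in[p,q]$ introduce the heights $h(y):=\dim[p,y]$ and $h^*(y):=\dim[y,q]$, each valued in $\{0,\dots,n\}$ and encoding which face of $K$ (resp.\ $-K$) contains $y-p$ (resp.\ $q-y$). Since $T^k$ is a continuous injection, $\dim T^k([p,y])=h(y)$; monotonicity with $T^kp=p$ gives $T^k([p,y])\subseteq[p,T^ky]$, whence $h(T^ky)\ge h(y)$, and symmetrically $h^*(T^ky)\ge h^*(y)$. Writing $F_{\mathrm{bot}}:=\{h<n\}$ and $F_{\mathrm{top}}:=\{h^*<n\}$, one has $\p[p,q]=F_{\mathrm{bot}}\cup F_{\mathrm{top}}$; the $h^*$-inequality forces $T^k(F_{\mathrm{bot}}\setminus F_{\mathrm{top}})\subseteq F_{\mathrm{bot}}$, so by density and continuity $T^k(F_{\mathrm{bot}})\subseteq F_{\mathrm{bot}}$, and analogously $T^k(F_{\mathrm{top}})\subseteq F_{\mathrm{top}}$; since the equator $F_{\mathrm{bot}}\cap F_{\mathrm{top}}$ is nowhere dense in $F_{\mathrm{bot}}$, bijectivity on $\p[p,q]$ upgrades these inclusions to equalities. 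Within $F_{\mathrm{bot}}$ the relative interiors of the lower facets are the connected components of the open stratum $\{y\in F_{\mathrm{bot}}:h(y)=n-1\}$, which is preserved by $T^k$; connectedness then forces $T^k$ to permute the lower facets (and analogously the upper ones). Let $m_0$ be the order of this combined permutation.

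Each facet $F$ of $[p,q]$ lies in an affine hyperplane $H\subset\R n$ and is mapped to itself by $T^{km_0}$. The $K$-order restricts on $H$ to the order determined by a facet $G$ of $K$, a solid pointed polyhedral cone in the translate of $H$ through the origin; thus $T^{km_0}|_F$ is an injective continuous self-map of the compact $(n-1)$-dimensional polyhedral cell $F$, monotone for the $G$-order, and the relative interior $F^\circ$ is nonempty, connected, and dense in $F$. Applying Proposition~\ref{th:base} to $M:=H\cap X$ (a closed $(n-1)$-manifold in $X$) yields $\P$ dense in $M$, hence in $F^\circ\subset M$, hence in $F$; since $\P(T)\subseteq\P(T^{km_0})$, the periodic points of $T^{km_0}|_F$ are dense in $F$. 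The inductive hypothesis $\mcal T(n-1)$ then supplies an integer $m_F$ with $T^{km_0 m_F}|_F$ the identity. Setting $N:=km_0\cdot\operatorname{lcm}\{m_F:F\text{ a facet}\}$, $T^N$ is the identity on every facet and hence on all of $\p[p,q]$, so $T|_{\p[p,q]}$ is periodic with period dividing $N$.

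The main obstacle is the facet-permutation step; the height inequalities there combine monotonicity with the homeomorphism property from Proposition~\ref{th:propA} in an essential way, and the bijectivity-plus-nowhere-dense-equator argument is needed to promote the inclusions to equalities. A secondary technical point is ensuring that $M=H\cap X$ meets the hypotheses of Proposition~\ref{th:base}; if the behavior of $X$ near $H$ obstructs this, one may replace $M$ by a suitable open subset of $H\cap\Int X$ containing $F^\circ$ that remains closed in $X$.
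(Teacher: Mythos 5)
Your proof is essentially correct, but it takes a genuinely different route from the paper's at the decisive step. The paper never shows that $T^k$ permutes the facets of $[p,q]$: it covers each open face by small periodically invariant open sets supplied by Proposition~\ref{th:base}(ii), applies $\mcal T(n-1)$ to each such local piece to conclude that the open face consists of periodic points, and then invokes Montgomery's theorem on the face. Your argument is global where the paper's is local: you first prove that $T^k$ permutes the facets (via the height functions $h,h^*$), pass to a power of $T^k$ fixing each facet setwise, apply $\mcal T(n-1)$ to the entire facet equipped with the order induced by the corresponding facet of $K$, and finish with a least common multiple instead of Montgomery's theorem. Your facet-permutation step in fact makes explicit something the paper leaves implicit: Proposition~\ref{th:base}(ii) requires the manifold $M$ to be periodically invariant, and the paper does not say why the faces (or the hyperplanes carrying them) are. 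So the extra work in your central step is not wasted.

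One claim in that central step is inaccurate as stated: the connected components of $\{y\in F_{\mathrm{bot}}:h(y)=n-1\}$ are not the relative interiors of the lower facets. For $K=\Rp^2$, $p=(0,0)$, $q=(1,1)$, the stratum is $F_{\mathrm{bot}}\setminus\{(0,0)\}$, whose components are the half-open segments $\{0\}\times(0,1]$ and $(0,1]\times\{0\}$; these strictly contain the facets' relative interiors, since they include equator points such as $(0,1)$. The correct description is that the components are the convex sets $[p,q]\cap\bigl(p+\mathrm{relint}\,G\bigr)$, one for each facet $G$ of $K$, each containing the relative interior of the corresponding lower facet of $[p,q]$ and contained in its closure; this still puts the components in bijection with the lower facets, so your permutation argument survives the correction. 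You should also justify the assertion that the equator $F_{\mathrm{bot}}\cap F_{\mathrm{top}}$ is nowhere dense in $F_{\mathrm{bot}}$: no facet of $[p,q]$ can lie in both $p+\p K$ and $q-\p K$, because its affine hull would then contain both $p$ and $q$, forcing $q-p$ into a supporting hyperplane of $K$ and contradicting $q-p\in\Int{K}$. With these two repairs the argument is complete.
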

\begin{proof}
The $(n-1)$-dimensional compact polyhedron $\p[p,q]$ is invariant
under $T^k$ (Proposition \ref{th:propA}). 
Each open face $F^\circ\subset \p[p,q]$ is a union of periodically
invariant open subsets $W_\lam$, each an $(n-1)$-manifold without
boundary, with $\P$ dense in $W_\lam$ (Proposition \ref{th:base}).

The maps $T|W_\lam$ are periodic by the Induction Hypothesis, so
$F^{\circ} \subset \P$.
Montgomery's Theorem implies $T|F^{\circ}$ is periodic, so $T|F$ is
also periodic.  Since $\p[p, q]$ is the union of the finitely many
faces $F$, it follows that $T|\p[p, q]$ is periodic.
\end{proof}

To complete the inductive proof of the Main Theorem, it suffices by
Montgomery's theorem to prove that an arbitrary $x\in X$ is periodic.
 As $X$ is open in $\R n$ and $\P$ is dense in $X$,
there is an order interval $[a, b]\subset X$ such that
\[
a\ll x \ll b, \qquad a, b \in \P_k.
\]
By Proposition \ref{th:arcs}, $a$ and $b$ are the endpoints of a
compact arc $J\subset \P_k\cap [a,b]$, totally ordered by $\ll$.
Define $p, q\in J$:
\[
p:=\sup\,\{y\in J\co y\preceq x\}, \qquad q:=\inf\,\{y\in J\co y\succeq
  x\}.
\]
If $p=q=x$ then $x\in \P_k$.  Otherwise $p\ll q$ and $x\in\p[p,q]$, whence $x\in \P$
by Proposition \ref{th:propF}.  \qed

\end{document}